\numberwithin{equation}{section}
\newtheorem{theorem}{Theorem}[section]
\newtheorem{lemma}[theorem]{Lemma}
\newtheorem{proposition}[theorem]{Proposition}
\newtheorem{corollary}[theorem]{Corollary}
\theoremstyle{definition}
\newtheorem{definition}[theorem]{Definition}
\newtheorem{example}[theorem]{Example}
\theoremstyle{remark}
\newtheorem{remark}[theorem]{Remark}
\newcommand\Apr{\operatorname{Apr}}
\begin{document}
\title[Roughness in quotient groups]{Roughness in quotient groups}%
\author[W. Mahmood]{Waqas Mahmood }
\address{Quaid-I-Azam University Islamabad, Pakistan}%
\email{wmahmood$@$qau.edu.pk}

\thanks{This research was partially supported by Higher Education Commission, Pakistan}
\keywords{Rough sets, Lower approximation, Upper approximation, Quotient groups, Group homomorphisms}%

\maketitle
\begin{abstract}
The theory of rough sets was firstly introduced by Pawlak (see \cite{p}). Many Mathematician has been studied the relations between rough sets and algebraic systems such as groups, rings and modules. In this paper we will introduce the lower and upper approximations in a quotient group. We will discuss several properties of the lower and upper approximations. Moreover under some additional assumptions we are able to show that the lower approximation is a normal subgroup of the quotient group but this property fails for the upper approximation. At the end we will develop several homomorphisms between  lower approximations.
\end{abstract}

\section{Introduction}
A rough set is a subset of a universe which is defined by a pair of ordinary sets called lower and upper approximation. The theory of rough sets is an extension of the set theory, for dealing with the ambiguity in information systems. Combining the theory of rough sets with abstract algebra is a way generalizing it. In recent years, most of the sets are based on the imprecise information. To analyze any such kind of information, mathematical logics are most helpful. On the other hand some authors has studied the rough algebraic structure. In \cite{b}, \cite{d} and \cite{m} the concept of rough groups, rough quotient groups are studied. In \cite{k} N. Kuroki and P. Wang introduced the notion of the lower and upper approximations with respect to a normal subgroup in a group. They have proved several properties of them. Moreover they also defined the lower and upper approximations with respect to a $t$-level subset of a fuzzy normal subgroup.

In \cite{k1} N. Kuroki introduced the rough left (resp. right and bi-) ideals in a semigroup. He has also defined the lower and upper approximations of a quotient semigroup with respect to a congruence relation over a semigroup. N. Kuroki proved that these are left (resp. right and bi-) ideals in the quotient semigroup. Q. M. Xiao and Z. Zhang have discussed the relations between the upper (lower) rough prime ideals and rough fuzzy prime ideals in a semigroup (see \cite{q}).

After that B. Davvaz introduced the concept of rough rings and ideals (see \cite{da}). He introduced the notion of rough sub-ring (resp. ideal) with respect
to an ideal of a ring which is an extended notion of a sub-ring (resp. ideal) in a ring. Also he has shown several properties of the lower and upper approximations with respect to an ideal in a ring.

In \cite{da2} B. Davvaz has defined the upper rough ideal in a ring $R$ with respect to a $t$-level congruence relation of a fuzzy idela on R. In \cite{b1} XU Bi-cai introduced anti-homomorphism of a group. Also P. Isaac and Neelima has studied some properties about rough ring homomorphism and anti-homomorphism (see \cite{i}). Moreover in \cite{h} S. Han, W. Cheng and J. Wang defined the rough ring in an approximation space. Rough modules are introduced and defined by Davvaz and Mahdavipour in 2006. They have studied some properties of the lower and the upper approximations in a rough module.

In the recent paper we shall introduce the notion of the lower and upper approximations in a quotient group. Then we will prove serval properties of them such as intersection, union and product. Moreover it is shown that the lower and upper approximations of a normal subgroup of a quotient group does not provide us any new information. After that we will produce some homomorphisms between the lower approximation spaces.
\section{Lower and upper approximations in a quotient groups}\label{q}
First of all in this section we will recall the notation of the rough sets.

\begin{definition}
Let $\varnothing\neq U$ be a universe and $\theta$ an equivalence relation over $U$. Then the pari $(U,\theta)$ is called an approximation space.
\end{definition}

\begin{definition}
If $(U,\theta)$ is an approximation space then the mapping $\Apr: P(U)\to P(U)\times P(U)$ defined by
\[
\Apr(X)=(\underline{X},\overline{X}), \text{ for all } X\in P(U)
\]
is called rough approximation operator. Here $\underline{X}:\{x\in U: [x]_\theta \subseteq X\}$ and $\overline{X}:\{x\in U: [x]_\theta \cap X\neq \varnothing\}$ are called lower and upper rough approximations of $X$ in $(U,\theta)$ respectively.
\end{definition}

Note that it is clear from the definition that $\underline{X}\subseteq X\subseteq \overline{X}.$

\begin{definition}
For a given approximation space $(U,\theta)$, a pair $(A,B)\in P(U)\times P(U)$ is called a rough set if'f $(A,B)=\Apr(X)$ for some $X\in P(U)$.
\end{definition}

Throughout this paper $G$ will be denoted as a group under multiplication with identity element $e$. Let $N$ be any normal subgroup of $G$. Now we define a relation $\theta$ over $G/N$ as follows:
\[
xN{\theta} yN \Leftrightarrow xN=(aN)(yN)(aN)^{-1}=(aya^{-1})N \text{ for some } a\in G.
\]
Note that $xN{\theta} yN$ if and only if they are conjugates in $G/N$. It is well known that $\theta$ is an equivalence relation over the quotient group $G/N$. Hence $(G/N,\theta)$ is an approximation space. We will denote the equivalence class of $xN\in G/N$ by $[xN]_{\theta}$. So $[xN]_{\theta}$ is the set of all conjugates of $xN$ in $G/N$. That is we have:
\[
[xN]_{\theta}=\{(aN)(xN)(aN)^{-1}:aN\in G/N\}=\{(axa^{-1})N:a\in G\}.
\]
First of all we will prove the following Lemma:

\begin{lemma}\label{1}
With the previous notation we have:
\[
[x_1x_2N]_{{\theta}}=[(x_1N)(x_2N)]_{{\theta}}\subseteq [x_1N]_{{\theta}}[x_2N]_{{\theta}}
\]
for all $x_1N,x_2N\in G/N$.
\end{lemma}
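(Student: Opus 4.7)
The plan is to handle the stated equality and the inclusion separately, both via direct computation from the definition of $[\,\cdot\,]_{\theta}$.

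For the equality $[x_1x_2N]_{\theta}=[(x_1N)(x_2N)]_{\theta}$, I would simply invoke the definition of multiplication in the quotient group: $(x_1N)(x_2N)=x_1x_2N$. Since the two cosets are equal as elements of $G/N$, they have identical conjugacy classes under $\theta$.

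For the inclusion, I would take an arbitrary element $yN\in[x_1x_2N]_{\theta}$. By definition of $\theta$, there exists $a\in G$ with
\[
yN=(ax_1x_2a^{-1})N.
\]
The key step is to insert the factor $a^{-1}a=e$ and rewrite
\[
ax_1x_2a^{-1}=(ax_1a^{-1})(ax_2a^{-1}),
\]
which lets me express $yN$ as the product $[(ax_1a^{-1})N]\cdot[(ax_2a^{-1})N]$ in $G/N$. Since $(ax_1a^{-1})N\in[x_1N]_{\theta}$ and $(ax_2a^{-1})N\in[x_2N]_{\theta}$ by definition of the equivalence class, this exhibits $yN$ as an element of the set product $[x_1N]_{\theta}[x_2N]_{\theta}$.

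There is no real obstacle here: the whole argument rests on the single algebraic identity $ax_1x_2a^{-1}=(ax_1a^{-1})(ax_2a^{-1})$, i.e.\ that conjugation by $a$ is a group homomorphism. The one thing to emphasize in the write-up is that the \emph{same} conjugating element $a$ is used in both factors, which is what makes the decomposition work; one should also note that the reverse inclusion need not hold, since elements of $[x_1N]_{\theta}[x_2N]_{\theta}$ involve two independent conjugators, which is presumably why the lemma is stated only as a containment.
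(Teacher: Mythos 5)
Your proposal is correct and follows essentially the same route as the paper: the equality comes straight from the coset multiplication $(x_1N)(x_2N)=x_1x_2N$, and the inclusion from the identity $ax_1x_2a^{-1}=(ax_1a^{-1})(ax_2a^{-1})$ applied with a single conjugator $a$. Your closing observation about why the reverse inclusion fails (independent conjugators on the right-hand side) is a helpful addition not made explicit in the paper.
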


\begin{proof}
Since $N$ is normal so $x_1x_2N=(x_1N)(x_2N)$. It proves the first equality. Now let $gN\in [x_1x_2N]_{{\theta}}$ then $gN=(ax_1x_2a^{-1})N$ for some $a\in G$. By normality of $N$ we get that
\[
gN=(ax_1x_2a^{-1})N=(ax_1a^{-1})(ax_2a^{-1})N=((ax_1a^{-1})N)((ax_2a^{-1})N))
\]
By definition $(ax_ia^{-1})N\in [x_iN]_{{\theta}}$ for all $i=1,2$. It follows that $gN\in [x_1N]_{{\theta}}[x_2N]_{{\theta}}$. It provides the following inclusion:
\[
[x_1x_2N]_{{\theta}}\subseteq [x_1N]_{{\theta}}[x_2N]_{{\theta}}.
\]
\end{proof}

Let $H$ be any subset of $G$ and $N$ a normal subgroup of $G$ such that $N\subseteq H$. Then $H/N\subseteq G/N$ is the set of all those elements $aN\in G/N$ such that $a\in H$. Moreover if $K\subseteq G$ with $N\subseteq K$ then $(H/N)(K/N)$ is the following set:
\[
(H/N)(K/N)=\{(hN)(kN):h\in H,k\in K\}=\{hkN:h\in H,k\in K\}
\]
It is clear that $(H/N)(K/N)=(HK)/N$. Now we define the following sets of $G/N$:
\[
\underline{\Apr}_{G/N}(H/N):=\{xN\in G/N:[xN]_{{\theta}}\subseteq H/N\}, \text{ and }
\]
\[
\overline{\Apr}_{G/N}(H/N):=\{xN\in G/N:[xN]_{{\theta}}\cap H/N\neq \varnothing\}.
\]
Then $\underline{\Apr}_{G/N}(H/N)$ (resp. $\overline{\Apr}_{G/N}(H/N)$) is called the lower (resp. upper) approximation of $H/N$ with respect to $N$ in the approximation space $(G/N,\theta)$.

\begin{lemma}\label{10}
With the above notation suppose that $H_i$ is a non-empty subset of $G$ such that $N\subseteq H_i$ for all $i=1,2$. If $H_1/N\subseteq H_2/N$ then
\[
\underline{\Apr}_{G/N}(H_1/N)\subseteq \underline{\Apr}_{G/N}(H_2/N)\text{ and }
\]
\[
\overline{\Apr}_{G/N}(H_1/N)\subseteq \overline{\Apr}_{G/N}(H_2/N).
\]
\end{lemma}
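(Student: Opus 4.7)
The plan is to prove both inclusions directly from the definitions, since the statement is essentially the monotonicity of the lower and upper approximation operators applied to the particular equivalence relation $\theta$ on $G/N$. No special property of $\theta$ or of the group structure is needed beyond the definitions of $\underline{\Apr}_{G/N}$ and $\overline{\Apr}_{G/N}$; the argument is purely set-theoretic, analogous to the classical monotonicity of Pawlak approximations.

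For the first inclusion, I would start with an arbitrary $xN\in \underline{\Apr}_{G/N}(H_1/N)$. By definition this means $[xN]_{\theta}\subseteq H_1/N$. Since we assume $H_1/N\subseteq H_2/N$, transitivity of inclusion immediately gives $[xN]_{\theta}\subseteq H_2/N$, which is exactly the condition for $xN\in \underline{\Apr}_{G/N}(H_2/N)$. This establishes
\[
\underline{\Apr}_{G/N}(H_1/N)\subseteq \underline{\Apr}_{G/N}(H_2/N).
\]

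For the upper approximation, I would again take an arbitrary $xN\in \overline{\Apr}_{G/N}(H_1/N)$, so that $[xN]_{\theta}\cap H_1/N\neq \varnothing$. Picking some $yN$ in this intersection, we have $yN\in [xN]_{\theta}$ and $yN\in H_1/N\subseteq H_2/N$, hence $yN\in [xN]_{\theta}\cap H_2/N$. Therefore $[xN]_{\theta}\cap H_2/N\neq \varnothing$, which yields $xN\in \overline{\Apr}_{G/N}(H_2/N)$, completing the proof.

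There is no real obstacle here: the lemma is a direct consequence of set-theoretic monotonicity and the hypothesis $H_1/N\subseteq H_2/N$. The only minor subtlety worth mentioning, which I would check in passing, is that the sets $\underline{\Apr}_{G/N}(H_i/N)$ and $\overline{\Apr}_{G/N}(H_i/N)$ are well defined under the standing assumption $N\subseteq H_i$, so that $H_i/N$ makes sense as a subset of $G/N$. Once this is noted, both inclusions follow in a single line each.
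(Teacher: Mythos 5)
Your proof is correct and follows essentially the same route as the paper: a direct set-theoretic unwinding of the definitions, using $H_1/N\subseteq H_2/N$ to transfer the containment $[xN]_{\theta}\subseteq H_1/N$ (resp.\ the nonempty intersection) from $H_1/N$ to $H_2/N$. Your version is in fact slightly more explicit than the paper's, which handles only the upper approximation in detail and dismisses the lower one as similar.
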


\begin{proof}
let $xN\in \overline{\Apr}_{G/N}(H_1/N)$ then $[xN]_{{\theta}}$ and $H_1/N$ have a non-empty intersection. Since $H_1/N\subseteq H_2/N$ so it follows that
\[
[xN]_{{\theta}}\cap H_2/N\neq \varnothing.
\]
By definition of the upper approximation we conclude that $xN\in \overline{\Apr}_{G/N}(H_2/N)$. Hence $\overline{\Apr}_{G/N}(H_1/N)$ is a subset of $\overline{\Apr}_{G/N}(H_2/N)$. By the similar arguments we can prove the other inclusion.
\end{proof}

\begin{proposition}\label{2}
Suppose that $N$ is a normal subgroup of $G$ and $H_i\subseteq G$ such that $N\subseteq H_i$ for all $i=1,2$ . Then the following conditions hold:
\begin{itemize}
\item [(1)] $\varnothing\neq \underline{\Apr}_{G/N}(H_1/N)\subseteq H_1/N\subseteq \overline{\Apr}_{G/N}(H_1/N).$
\item [(2)] $\overline{\Apr}_{G/N}((H_1\cup H_2)/N)=\overline{\Apr}_{G/N}(H_1/N)\cup \overline{\Apr}_{G/N}(H_2/N).$
\item [(3)] $\underline{\Apr}_{G/N}(H_1/N)\cup \underline{\Apr}_{G/N}(H_2/N)\subseteq \underline{\Apr}_{G/N}((H_1\cup H_2)/N)$.
\item [(4)] $\overline{\Apr}_{G/N}((H_1\cup H_2)/N)=\overline{\Apr}_{G/N}((H_1/N)\cup (H_2/N))$.
\item [(5)] $\underline{\Apr}_{G/N}((H_1\cup H_2)/N)=\underline{\Apr}_{G/N}((H_1/N)\cup (H_2/N))$.
\item [(6)] If $H_1\subseteq H_2$ then $\underline{\Apr}_{G/N}(H_1/N)\subseteq \underline{\Apr}_{G/N}(H_2/N)$.
\item [(7)] If $H_1\subseteq H_2$ then $\overline{\Apr}_{G/N}(H_1/N)\subseteq \overline{\Apr}_{G/N}(H_2/N)$.
\end{itemize}

\end{proposition}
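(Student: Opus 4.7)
The plan is to handle the seven clauses by organizing them around two elementary facts: first, that $xN\in[xN]_\theta$ always (since the identity conjugation gives back $xN$), and second, that since $N$ is contained in every $H_i$ the quotient respects unions, i.e.\ $(H_1\cup H_2)/N=(H_1/N)\cup(H_2/N)$. With these in hand the bulk of the statements reduce either to a set-theoretic manipulation or to a direct appeal to Lemma~\ref{10}.

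For (1), the containments $\underline{\Apr}_{G/N}(H_1/N)\subseteq H_1/N\subseteq\overline{\Apr}_{G/N}(H_1/N)$ follow from $xN\in[xN]_\theta$: membership of $xN$ in the lower approximation forces $xN\in H_1/N$, and membership in $H_1/N$ puts $xN$ itself into $[xN]_\theta\cap H_1/N$. For non-emptiness I would point out that $[eN]_\theta=\{(aea^{-1})N:a\in G\}=\{eN\}$, and since $N\subseteq H_1$ we have $eN\in H_1/N$, so $eN$ lies in the lower approximation. This is the one step where a small trick (singleton conjugacy class of the identity) is needed, and I expect it to be the main obstacle for a reader who has not noticed that central elements have singleton classes.

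For (2), I would simply unwind the definition: $[xN]_\theta\cap(H_1\cup H_2)/N\neq\varnothing$ is equivalent, via $(H_1\cup H_2)/N=H_1/N\cup H_2/N$ and distribution of intersection over union, to $[xN]_\theta\cap H_1/N\neq\varnothing$ or $[xN]_\theta\cap H_2/N\neq\varnothing$. Clauses (4) and (5) then become tautologies since both sides of each equation refer to the same subset of $G/N$ once the identity $(H_1\cup H_2)/N=H_1/N\cup H_2/N$ is invoked; I would state this once at the outset and use it throughout. For (3), applying Lemma~\ref{10} with $H_1/N\subseteq(H_1\cup H_2)/N$ and with $H_2/N\subseteq(H_1\cup H_2)/N$ yields two inclusions whose union gives the desired containment.

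Finally, (6) and (7) are immediate consequences of Lemma~\ref{10}: the hypothesis $H_1\subseteq H_2$ (together with $N\subseteq H_1$) gives $H_1/N\subseteq H_2/N$, and the lemma applies verbatim. I would present (6) and (7) as a single sentence rather than repeating the argument, since the lemma does all the work. Overall, I do not anticipate any serious difficulty; the only step requiring thought is the non-emptiness in (1), and everything else is bookkeeping once the union-quotient identity and the reflexivity $xN\in[xN]_\theta$ are recorded.
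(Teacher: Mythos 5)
Your proposal is correct and follows essentially the same route as the paper: non-emptiness in (1) via the singleton conjugacy class $[N]_\theta=\{N\}\subseteq H_1/N$, clause (2) by unwinding the definition and distributing intersection over union, clauses (4) and (5) as immediate from $(H_1\cup H_2)/N=(H_1/N)\cup(H_2/N)$, and clauses (3), (6), (7) from Lemma~\ref{10}. No gaps.
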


\begin{proof}

$(1)$ It is obvious. Note that $e\in N\subseteq H_1$ and $[N]_{{\theta}}=\{N\}\subseteq H_1/N.$

$(2)$ Since $N\subseteq H_i$ for all $i=1,2$ so it follows that $N\subseteq H_1\cup H_2$. Then $xN\in \overline{\Apr}_{G/N}(H_1/N)\cup \overline{\Apr}_{G/N}(H_2/N)$
\[
\Leftrightarrow xN\in \overline{\Apr}_{G/N}(H_1/N) \text{ or } xN\in \overline{\Apr}_{G/N}(H_2/N).
\]
\[
\Leftrightarrow [xN]_{{\theta}}\cap H_1/N\neq \varnothing \text{ or } [xN]_{{\theta}}\cap H_2/N\neq \varnothing.
\]
\[
\Leftrightarrow  \text{ There exists } yN\in G/N  \text{ such that } yN\in H_1/N  \text{ or } yN\in H_2/N  \text{ and } xN{\theta} yN.
\]
\[
\Leftrightarrow yN=hN  \text{ for some } h\in H_1  \text{ or } h\in H_2 \text{ and } xN{\theta} yN.
\]
\[
\Leftrightarrow yN=hN  \text{ for some } h\in H_1\cup H_2 \text{ and } xN{\theta} yN.
\]
\[
\Leftrightarrow yN\in (H_1\cup H_2)/N \text{ and } xN{\theta} yN.
\]
\[
\Leftrightarrow [xN]_{{\theta}}\cap (H_1\cup H_2)/N\neq \varnothing.
\]
\[
\Leftrightarrow xN\in \overline{\Apr}_{G/N}((H_1\cup H_2)/N).
\]
This proves the equality.

$(3)$ Since $H_i\subseteq H_1\cup H_2$ for all $i=1,2$ it implies that $H_i/N\subseteq (H_1\cup H_2)/N$ for all $i=1,2$. Then by Lemma \ref{10} we have $\underline{\Apr}_{G/N}(H_i/N)\subseteq \underline{\Apr}_{G/N}((H_1\cup H_2)/N)$ for all $i=1,2$. Hence
\[
\underline{\Apr}_{G/N}(H_1/N)\cup \underline{\Apr}_{G/N}(H_1/N)\subseteq \underline{\Apr}_{G/N}((H_1\cup H_2)/N).
\]
Note that $(4)$ and $(5)$ are obvious in view of the fact that $(H_1\cup H_2)/N=(H_1/N)\cup (H_2/N).$

$(6)$ Since $H_1\subseteq H_2$ it implies that $H_1/N\subseteq H_2/N$. Then by Lemma \ref{10} we have
\[
\underline{\Apr}_{G/N}(H_1/N)\subseteq \underline{\Apr}_{G/N}(H_2/N).
\]

$(7)$ It can be proved by the same arguments as we used in $(6)$.
\end{proof}

In the following we will show that $\underline{\Apr}_{G/N}(H_1/N)$ and $\overline{\Apr}_{G/N}(H_1/N)$ are not subgroups of $G/N$ in general.

\begin{example}\label{22}
Let $G=Q_8=\{\pm 1,\pm i,\pm j,\pm k\}$, the quaternion group with the following relations:
\[
i^2=j^2=k^2=-1, ij=k, jk=i \text{ and } ki=j.
\]
Let $N=\{\pm 1\}$ and $H=N\cup \{i,j\}$. Then it is clear that $N$ is a normal subgroup of $G$ such that $N\subseteq H$. Moreover $H$ is not a subgroup of $G$. It can be shown that $iN=-iN$, $jN=-jN$ and $kN=-kN$. Then we have
\[
G/N=\{N,iN,jN,kN\}\text { and } H/N=\{N,iN,jN\}.
\]
Now $[aN]_{{\theta}}=\{(xax^{-1})N:x\in G\}.$ It follows that
\[
[iN]_{{\theta}}=\{iN\},[jN]_{{\theta}}=\{jN\},[kN]_{{\theta}}=\{kN\}.
\]
Hence $\underline{\Apr}_{G/N}(H/N)=H/N=\overline{\Apr}_{G/N}(H/N).$ Therefore $\underline{\Apr}_{G/N}(H_1/N)$ and $\overline{\Apr}_{G/N}(H_1/N)$ are not subgroups of $G/N$.
\end{example}

\begin{proposition}\label{5}
Suppose that $H_i\subseteq G$ with $N\subseteq H_i$ for all $i=1,2$ where $N$ is a normal subgroup of a group $G$. Then we have:
\begin{itemize}
\item [(1)] $\overline{\Apr}_{G/N}((H_1\cap H_2)/N)\subseteq \overline{\Apr}_{G/N}((H_1/N)\cap (H_2/N))\subseteq \overline{\Apr}_{G/N}(H_1/N)\cap \overline{\Apr}_{G/N}(H_2/N)$.
\item [(2)] $\underline{\Apr}_{G/N}((H_1\cap H_2)/N)\subseteq \underline{\Apr}_{G/N}((H_1/N)\cap (H_2/N))= \underline{\Apr}_{G/N}(H_1/N)\cap \underline{\Apr}_{G/N}(H_2/N)$.
\end{itemize}
\end{proposition}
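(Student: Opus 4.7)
The plan is to reduce both parts to two simple ingredients: the set-theoretic inclusion $(H_1\cap H_2)/N\subseteq (H_1/N)\cap(H_2/N)$, and the monotonicity of $\overline{\Apr}_{G/N}$ and $\underline{\Apr}_{G/N}$ under inclusion, which is exactly the content of Lemma \ref{10}. Inspecting the proof of Lemma \ref{10} shows it uses only the defining formulas for the approximations, so monotonicity is available for any subsets of $G/N$, not merely those of the form $H/N$; I would appeal to it in that generality.

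First I would verify the set-theoretic inclusion: if $a\in H_1\cap H_2$, then $aN$ lies simultaneously in $H_1/N$ and $H_2/N$, giving $(H_1\cap H_2)/N\subseteq (H_1/N)\cap (H_2/N)$. For the remaining half of part (1) I would combine this with the obvious $(H_1/N)\cap(H_2/N)\subseteq H_i/N$ for $i=1,2$. Applying monotonicity of $\overline{\Apr}_{G/N}$ to these two inclusions in turn produces the chain in part (1); applying monotonicity of $\underline{\Apr}_{G/N}$ to the first of them yields the first inclusion of part (2).

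The only remaining piece is the equality in (2). The $\subseteq$ direction follows once more from monotonicity applied to $(H_1/N)\cap (H_2/N)\subseteq H_i/N$. For the reverse inclusion I would unwind the definition: if $xN\in \underline{\Apr}_{G/N}(H_1/N)\cap \underline{\Apr}_{G/N}(H_2/N)$, then $[xN]_{{\theta}}\subseteq H_1/N$ and $[xN]_{{\theta}}\subseteq H_2/N$, whence $[xN]_{{\theta}}\subseteq (H_1/N)\cap(H_2/N)$, which by definition places $xN$ in $\underline{\Apr}_{G/N}((H_1/N)\cap (H_2/N))$.

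I do not anticipate a real obstacle; the one conceptual point worth emphasizing is the asymmetry between the two halves. The condition ``$[xN]_{{\theta}}\subseteq A\cap B$'' is equivalent to the conjunction ``$[xN]_{{\theta}}\subseteq A$ and $[xN]_{{\theta}}\subseteq B$'', which forces the equality in (2); the analogous statement for the upper approximation, ``$[xN]_{{\theta}}\cap A\neq\varnothing$ and $[xN]_{{\theta}}\cap B\neq\varnothing$'', is strictly weaker than ``$[xN]_{{\theta}}\cap (A\cap B)\neq\varnothing$'', which is precisely why only an inclusion is obtainable in (1).
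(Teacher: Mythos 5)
Your proposal is correct and follows essentially the same route as the paper: both reduce the chains to the inclusions $(H_1\cap H_2)/N\subseteq (H_1/N)\cap(H_2/N)\subseteq H_i/N$ together with the monotonicity of Lemma \ref{10}, and both obtain the equality in (2) by unwinding the definition of the lower approximation. Your explicit remark that monotonicity applies to arbitrary subsets of $G/N$ (not just those presented as $H/N$) is a small point the paper passes over silently, but otherwise the arguments coincide.
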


\begin{proof}
Note that $(H_1\cap H_2)/N\subseteq (H_1/N)\cap (H_2/N)\subseteq H_i/N$ for all $i=1,2$. By Lemma \ref{10} it follows that
\[
\overline{\Apr}_{G/N}((H_1\cap H_2)/N)\subseteq \overline{\Apr}_{G/N}((H_1/N)\cap (H_2/N))\subseteq \overline{\Apr}_{G/N}(H_i/N) \text{ for all $i=1,2$.}
\]
Then we have
\[
\overline{\Apr}_{G/N}((H_1\cap H_2)/N)\subseteq \overline{\Apr}_{G/N}((H_1/N)\cap (H_2/N))\subseteq \overline{\Apr}_{G/N}(H_1/N)\cap \overline{\Apr}_{G/N}(H_2/N).
\]
By the above Similar arguments we can prove that
\[
\underline{\Apr}_{G/N}((H_1\cap H_2)/N)\subseteq \underline{\Apr}_{G/N}((H_1/N)\cap (H_2/N))\subseteq \underline{\Apr}_{G/N}(H_1/N)\cap \underline{\Apr}_{G/N}(H_2/N).
\]
Now let $xN$ be an arbitrary element of $\underline{\Apr}_{G/N}(H_1/N)\cap \underline{\Apr}_{G/N}(H_2/N)$ then $xN\in \underline{\Apr}_{G/N}(H_i/N)$ for all $i=1,2$. It implies that $[xN]_{{\theta}}\subseteq H_i/N$ for all $i=1,2$. Then $[xN]_{{\theta}}\subseteq (H_1/N)\cap (H_2/N)$. It follows that $xN\in \underline{\Apr}_{G/N}((H_1/N)\cap (H_2/N))$. This finishes the proof of the Proposition.
\end{proof}

\begin{corollary}\label{12}
With the notation of Proposition \ref{5} assume in addition that $H_1$ and $H_2$ are subgroups of $G$. Then the following are true:
\[
\overline{\Apr}_{G/N}((H_1\cap H_2)/N)= \overline{\Apr}_{G/N}((H_1/N)\cap (H_2/N))\text{ and }
\]
\[
\underline{\Apr}_{G/N}((H_1\cap H_2)/N)= \underline{\Apr}_{G/N}((H_1/N)\cap (H_2/N)).
\]
\end{corollary}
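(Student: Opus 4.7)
The plan is to observe that Proposition \ref{5} already supplies one direction of each equality, namely
\[
\overline{\Apr}_{G/N}((H_1\cap H_2)/N)\subseteq \overline{\Apr}_{G/N}((H_1/N)\cap (H_2/N))
\]
and similarly for the lower approximation. So I only need to establish the reverse inclusions. For this, my strategy is to show that the added hypothesis that $H_1$ and $H_2$ are subgroups forces the set-theoretic equality
\[
(H_1\cap H_2)/N \;=\; (H_1/N)\cap (H_2/N),
\]
after which both claims of the corollary follow immediately by applying $\overline{\Apr}_{G/N}$ and $\underline{\Apr}_{G/N}$ to both sides.

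The forward inclusion of this set equality is trivial and is actually recorded in the proof of Proposition \ref{5}. For the reverse inclusion, I would pick $xN\in(H_1/N)\cap(H_2/N)$ and use the defining condition: there exist $h_1\in H_1$ and $h_2\in H_2$ with $xN=h_1N=h_2N$. From $xN=h_1N$ we get $x=h_1 n$ for some $n\in N$. Here is the one place the subgroup hypothesis is essential: since $N\subseteq H_1$ and $H_1$ is a subgroup, $x=h_1 n\in H_1$. The same argument applied to $H_2$ shows $x\in H_2$, so $x\in H_1\cap H_2$ and therefore $xN\in(H_1\cap H_2)/N$.

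Once the set equality is in hand, applying the upper approximation operator to both sides yields
\[
\overline{\Apr}_{G/N}((H_1\cap H_2)/N)=\overline{\Apr}_{G/N}((H_1/N)\cap(H_2/N)),
\]
and the analogous statement for $\underline{\Apr}_{G/N}$. Combined with the chain of inclusions already given by Proposition \ref{5}, both asserted equalities follow.

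The work is essentially bookkeeping, and there is no real obstacle. The only subtle point is recognising why the subgroup assumption is genuinely needed: for an arbitrary subset $H$ with $N\subseteq H$ one can have $h\in H$ but $hN\not\subseteq H$, which is exactly what allows the strict inclusion $(H_1\cap H_2)/N\subsetneq (H_1/N)\cap (H_2/N)$ to occur in the general setting of Proposition \ref{5}. Once both $H_1$ and $H_2$ are closed under multiplication (and contain $N$), the cosets of $N$ sit entirely inside $H_i$, which collapses that gap.
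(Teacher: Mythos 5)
Your proof is correct and takes essentially the same approach as the paper: both reduce the corollary to the set identity $(H_1\cap H_2)/N=(H_1/N)\cap(H_2/N)$ and establish the nontrivial inclusion by using that a subgroup containing $N$ is a union of cosets of $N$. The only cosmetic difference is that the paper deduces $h_1,h_2\in H_1\cap H_2$ from $h_1h_2^{-1}\in N\subseteq H_i$, whereas you place the representative $x=h_1n$ itself in $H_1\cap H_2$; the two computations are interchangeable.
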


\begin{proof}
Suppose that $H_1$ and $H_2$ are subgroups of $G$. We claim that $(H_1/N)\cap (H_2/N)=(H_1\cap H_2)/N$. Note that $(H_1\cap H_2)/N\subseteq (H_1/N)\cap (H_2/N)$. Let $xN\in (H_1/N)\cap (H_2/N)$ then
\[
xN=h_1N=h_2N \text{ where } h_i\in H_i \text{ for all i=1,2.}
\]
It implies that $h_1h_2^{-1}\in N\subseteq H_i$ for all $i=1,2.$ Since $H_1$ and $H_2$ are subgroups so $h_1,h_2\in H_i$ for all $i=1,2.$ It follows that $xN\in (H_1\cap H_2)/N$. This proves the claim. Moreover it also provides us the required equalities.
\end{proof}

Note that in Corollary \ref{12} the condition of $H_1$ and $H_2$ are subgroups of $G$ is necessary. In the next we will give an example such that the results in Corollary are not true if we skip this condition.

\begin{example}
Let $G=A_4$, the alternating group and $N=\{I,(12)(34),(13)(24),(14)(23)\}$. Then $N$ is a normal subgroup of $G$. Let $H_1=N\cup \{(123),(124)\}$ and $H_2=N\cup \{(132),(142)\}$. Note that non of the $H_i$'s is a subgroup of $G$. Moreover $(132)N=(124)N$ and $(123)N=(142)N$. Then we have
\[
G/N=\{N,(123)N,(132)N\}=\{N,(142)N,(124)N\}\text{ and}
\]
\[
H_1/N=H_2/N=G/N.
\]
Now $H_1\cap H_2= N$ then $(H_1\cap H_2)/N=N/N\neq G/N=(H_1/N)\cap (H_2/N).$ Since $N$ and $G$ both are normal subgroups of $G$ so by the next Lemma \ref{6} it follow that
\[
\underline{\Apr}_{G/N}((H_1\cap H_2)/N)=N/N=\overline{\Apr}_{G/N}((H_1\cap H_2)/N), \text{ and }
\]
\[
\underline{\Apr}_{G/N}((H_1/N)\cap (H_2/N))=G/N=\overline{\Apr}_{G/N}((H_1/N)\cap (H_2/N)).
\]
This proves that the equality does note hold in the statement of the last Corollary \ref{12}.
\end{example}

In the following we will show that if $N$ and $H$ both are normal subgroups of $G$ such that $N\subseteq H$ then the lower and upper approximations of $H/N$ does not provide us any new information.

\begin{lemma}\label{6}
Suppose that $N$ and $H$ are normal subgroups of $G$ with $N\subseteq H.$ Then the following result hold:
\[
\underline{\Apr}_{G/N}(H/N)=H/N=\overline{\Apr}_{G/N}(H/N).
\]
\end{lemma}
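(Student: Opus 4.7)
The plan is to exploit Proposition~\ref{2}(1), which already supplies the chain
\[
\underline{\Apr}_{G/N}(H/N) \subseteq H/N \subseteq \overline{\Apr}_{G/N}(H/N)
\]
for any $H$ with $N \subseteq H$. Thus the proof reduces to establishing the two reverse inclusions $H/N \subseteq \underline{\Apr}_{G/N}(H/N)$ and $\overline{\Apr}_{G/N}(H/N) \subseteq H/N$, and the only extra hypothesis to use is that $H$ (in addition to $N$) is normal in $G$.

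For the inclusion $H/N \subseteq \underline{\Apr}_{G/N}(H/N)$, I would take an arbitrary $xN \in H/N$, so $x \in H$, and verify that the whole conjugacy class $[xN]_{\theta}$ sits inside $H/N$. A typical element of $[xN]_{\theta}$ is $(axa^{-1})N$ for some $a \in G$, and normality of $H$ in $G$ forces $axa^{-1} \in H$, whence $(axa^{-1})N \in H/N$. This yields $[xN]_{\theta} \subseteq H/N$, so by definition $xN \in \underline{\Apr}_{G/N}(H/N)$.

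For the inclusion $\overline{\Apr}_{G/N}(H/N) \subseteq H/N$, I would start with $xN \in \overline{\Apr}_{G/N}(H/N)$, so that $[xN]_{\theta} \cap H/N \neq \varnothing$. Unwinding, there exists $a \in G$ with $(axa^{-1})N \in H/N$, i.e.\ $(axa^{-1})N = hN$ for some $h \in H$. Combined with $N \subseteq H$ this gives $axa^{-1} \in hN \subseteq H$, and then the normality of $H$ in $G$ yields $x = a^{-1}(axa^{-1})a \in H$, hence $xN \in H/N$.

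The argument is short and essentially a direct application of the normality of $H$; the only step where one has to be slightly careful is the passage $(axa^{-1})N \in H/N \Rightarrow axa^{-1} \in H$ in the second inclusion, which genuinely uses the hypothesis $N \subseteq H$. I do not expect any deeper obstacle.
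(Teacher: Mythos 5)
Your proof is correct and uses essentially the same idea as the paper: combine Proposition~\ref{2}(1) with the observation that normality of $H$ keeps the whole conjugacy class $[xN]_{\theta}$ inside $H/N$ whenever $xN$ meets $H/N$. The paper merely packages your two reverse inclusions into the single inclusion $\overline{\Apr}_{G/N}(H/N)\subseteq \underline{\Apr}_{G/N}(H/N)$, which together with Proposition~\ref{2}(1) forces all three sets to coincide.
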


\begin{proof}
To prove the result let $xN\in \overline{\Apr}_{G/N}(H/N)$ then there exists $yN\in G$ such that $yN=hN\in H/N$ for some $h\in H$ and $xN {\theta} yN$. It implies that $[xN]_{{\theta}}=[yN]_{{\theta}}=[hN]_{{\theta}}.$ Since $H$ is normal so it follows that $aha^{-1}N\in H/N$ for all $a\in G$ and $[hN]_{{\theta}}\subseteq H/N$. So we get that $xN\in \underline{\Apr}_{G/N}(H/N).$ So $\overline{\Apr}_{G/N}(H/N)\subseteq \underline{\Apr}_{G/N}(H/N)$. This proves the result in view of Proposition \ref{2}$(1)$.
\end{proof}

\begin{theorem}\label{3}
Let $G$ be a group and $N$ a normal subgroup of $G$. Then for any non-empty subsets $H_1$ and $H_2$ of $G$ containing $N$ the following statements hold:
\begin{itemize}
\item [(1)] $\overline{\Apr}_{G/N}((H_1H_2)/N)\subseteq \overline{\Apr}_{G/N}(H_1/N)\overline{\Apr}_{{G/N}}(H_2/N).$
\item [(2)] $\underline{\Apr}_{G/N} (H_1/N)\underline{\Apr}_{G/N} (H_2/N)\subseteq \underline{\Apr}_{G/N}((H_1H_2)/N).$
\end{itemize}
\end{theorem}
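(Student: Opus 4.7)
The plan is to prove each containment by taking an arbitrary element of the left-hand side, unpacking the definitions of upper and lower approximation, and exploiting the fact that conjugation in $G/N$ distributes across products, which is precisely the content of Lemma \ref{1}.

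For part $(1)$, I would start with $xN \in \overline{\Apr}_{G/N}((H_1H_2)/N)$, so there exists $yN \in [xN]_{\theta} \cap (H_1H_2)/N$. Since $N$ is normal I can write $yN = (h_1 h_2)N = (h_1N)(h_2N)$ for some $h_i \in H_i$, and since $xN\, {\theta}\, yN$ there is an $a \in G$ with $xN = (aya^{-1})N$. The crucial step is to push the conjugation across the product:
\[
xN = (ah_1 h_2 a^{-1})N = ((ah_1 a^{-1})N)\cdot ((ah_2 a^{-1})N).
\]
Now $(ah_i a^{-1})N$ is conjugate to $h_iN$ in $G/N$, so $h_iN \in [(ah_i a^{-1})N]_{\theta} \cap H_i/N$, which witnesses $(ah_i a^{-1})N \in \overline{\Apr}_{G/N}(H_i/N)$. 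This exhibits $xN$ as a product of an element of $\overline{\Apr}_{G/N}(H_1/N)$ with an element of $\overline{\Apr}_{G/N}(H_2/N)$, as required.

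For part $(2)$, given $xN = (p_1N)(p_2N)$ with each $p_iN \in \underline{\Apr}_{G/N}(H_i/N)$, I have $[p_iN]_{\theta} \subseteq H_i/N$ by definition. Then Lemma \ref{1} applies directly:
\[
[xN]_{\theta} = [(p_1N)(p_2N)]_{\theta} \subseteq [p_1N]_{\theta}[p_2N]_{\theta} \subseteq (H_1/N)(H_2/N) = (H_1H_2)/N,
\]
so $xN \in \underline{\Apr}_{G/N}((H_1H_2)/N)$. There is no serious obstacle in either part; the only substantive check is in $(1)$, where one must verify that the factors $(ah_i a^{-1})N$ genuinely lie in the upper approximations, which follows since $\theta$ is symmetric and $h_iN$ provides the required element of $H_i/N$ inside $[(ah_i a^{-1})N]_{\theta}$. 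Lemma \ref{1} was set up precisely to enable this computation, so both inclusions reduce to a careful unwinding of definitions.
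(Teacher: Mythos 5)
Your proposal is correct and follows essentially the same route as the paper: the same conjugation-distributes-over-products decomposition $xN=((ah_1a^{-1})N)((ah_2a^{-1})N)$ for part (1), with $h_iN$ witnessing membership of $(ah_ia^{-1})N$ in $\overline{\Apr}_{G/N}(H_i/N)$, and the same direct application of Lemma \ref{1} for part (2). No gaps.
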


\begin{proof}
For the proof of $(1)$ let $xN\in \overline{\Apr}_{G/N}((H_1H_2)/N)$ then $[xN]_{{\theta}}\cap (H_1H_2)/N\neq \varnothing$. Then there exists $yN\in G/N$ such that $yN\in (H_1H_2)/N$ and $xN {\theta} yN$. It implies that $yN=(h_1h_2)N$ and $xN=(aya^{-1})N$ where $a\in G$ and $h_i\in H_i$ for all $i=1,2.$ Since $N$ is normal so we conclude that
\[
xN=(aya^{-1})N=(aN)(yN)(a^{-1}N)=(aN)((h_1h_2)N)(a^{-1}N)
\]
\[
=ah_1h_2a^{-1}N=(ah_1a^{-1})(ah_2a^{-1})N=(ah_1a^{-1}N)(ah_2a^{-1}N).
\]
Since $h_iN\in H_i/N$ for all $i=1,2$ so it induces the following fact:
\[
[ah_ia^{-1}N]_{{\theta}}\cap H_i/N= [h_iN]_{{\theta}}\cap H_i/N\neq \varnothing \text{ for all $i=1,2$.}
\]
Then by definition $ah_ia^{-1}N\in \overline{\Apr}_{G/N}(H_i/N)$ for all $i=1,2$. So we get that $xN$ belongs to the set $ \overline{\Apr}_{G/N}(H_1/N)\overline{\Apr}_{{G/N}}(H_2/N)$. It proves that $\overline{\Apr}_{G/N}((H_1H_2)/N)$ is a subset of $ \overline{\Apr}_{G/N}(H_1/N)\overline{\Apr}_{{G/N}}(H_2/N).$ This finishes the proof of $(1)$.

$(2)$ Suppose that $xN\in \underline{\Apr}_N (H_1/N)\underline{\Apr}_{G/N} (H_2/N)$. Then it implies that
\[
xN=(y_1N)(y_2N)=y_1y_2N\text{ where } y_iN\in \underline{\Apr}_{G/N}(H_i/N) \text{ for all $i=1,2$.}
\]
Then $[y_iN]_{{\theta}}\subseteq H_i/N$ for all $i=1,2$. By Lemma \ref{1} it follows that
\[
[xN]_{{\theta}}=[y_1y_2N]_{{\theta}}\subseteq [y_1N]_{{\theta}}[y_2N]_{{\theta}}.
\]
Now $[y_1N]_{{\theta}}[y_2N]_{{\theta}}\subseteq (H_1/N)(H_2/N)=(H_1H_2)/N$. It implies that $xN\in \underline{\Apr}_{G/N}((H_1H_2)/N)$. Hence this proves the claim in $(2)$.
\end{proof}

In the next example we will show that $\underline{\Apr}_{G/N} (H_1/N)\underline{\Apr}_{G/N} (H_2/N)$ is not equal to $\underline{\Apr}_{G/N}((H_1H_2)/N).$

\begin{example}\label{13}
Let $G=S_4$, the permutation group over the set $\{1,2,3,4\}$. Let $N=\{I,(12)(34),(13)(24),(14)(23)\}$, $H_1=N\cup \{(1324)\}$ and $H_2=N\cup \{(3412),(1243)\}$. Then $N$ is a normal subgroup such that $N\subseteq H_i$ for all $i=1,2$. It can be shown that:
\[
(3412)N=(2143)N=\{(13),(24),(2143),(3412)\},
\]
\[
(1324)N=(4231)N=\{(12),(34),(4231),(1324)\},
\]
\[
(1243)N=(3421)N=\{(14),(23),(3421),(1243)\},
\]
\[
(123)N=\{(123),(134),(124),(243)\}, \text{ and }
\]
\[
(132)N=\{(132),(143),(142),(234)\}.
\]
Since $|G/N|=6$ so it follows that
\[
H_1/N=\{N,(1324)N\},H_2/N=\{N,(3412)N,(1243)N\} \text{ and }
\]
\[
G/N=\{N,(1234)N,(1324)N,(1243)N,(123)N,(132)N\}.
\]
Moreover if $\alpha\in G$ is any permutation then $[\alpha N]_{{\theta}}=\{\beta N:\alpha \text{ and }\beta \text{ have same cycle structure}\}.$ So we have
\[
[(1234)N]_{{\theta}}=\{(1234)N,(1324)N,(1243)N\} \text{ and } [(123)N]_{{\theta}}=\{(132)N,(123)N\}
\]
Then we conclude that $\underline{\Apr}_{G/N}(H_i/N)=\{N\}=N/N$ for all $i=1,2$. Also note that $(H_1H_2)/N=\{N,(1324)N,(3412)N,(1243)N\}$. It implies that $\underline{\Apr}_{G/N}((H_1H_2)/N)=\{N,(1324)N,(3412)N,(1243)N\}$. Hence it follows that
\[
\underline{\Apr}_{G/N}(H_1/N)\underline{\Apr}_{G/N}(H_2/N)=N/N\neq \underline{\Apr}_{G/N}((H_1H_2)/N).
\]
\end{example}

\begin{definition}
Let $N\subseteq G$ be a normal subgroup and $H\subseteq G$ containing $N$. Suppose that $\Apr_{G/N}(H/N):=(\underline{\Apr}_{G/N}(H/N),\overline{\Apr}_{G/N}(H/N))$ and $\underline{\Apr}_{G/N}(H/N)\neq\overline{\Apr}_{G/N}(H/N)$ then $\Apr_{G/N}(H/N)$ is a rough set in the approximation space $(G/N,\theta)$.
\begin{itemize}
  \item [(1)] $\Apr_{G/N}(H/N)$ is called an upper rough subgroup (resp. normal subgroup) of $G/N$ if $\overline{\Apr}_{G/N}(H/N)$ is a subgroup (resp. normal subgroup) of $G/N$.
  \item [(2)] $\Apr_{G/N}(H/N)$ is called a lower rough subgroup (resp. normal subgroup) of $G/N$ if $\underline{\Apr}_{G/N}(H/N)$ is a subgroup (resp. normal subgroup) of $G/N$.
  \item [(3)] $\Apr_{G/N}(H/N)$ is called a rough subgroup (resp. normal subgroup) of $G/N$ if it is both upper and lower rough subgroup (resp. normal subgroup).
\end{itemize}
\end{definition}

As we have shown in Example \ref{22} that the lower and upper approximations are not subgroups. But in the next result we will prove that the lower approximation of $H/N$ is a normal subgroup of $G/N$ provided that $H$ is a subgroup of $G$.

\begin{proposition}\label{4}
Let $N$ be a normal subgroup and $H$ a subgroup of $G$ with $N\subseteq H$. Then $\Apr_{G/N}(H/N)$ is a lower rough normal subgroup of $G/N$.
\end{proposition}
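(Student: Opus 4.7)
The plan is to verify that $\underline{\Apr}_{G/N}(H/N)$ satisfies the subgroup axioms and is stable under conjugation by arbitrary elements of $G/N$. Throughout I will exploit Lemma \ref{1} for the multiplicative behaviour of conjugacy classes in $G/N$ and the fact, used crucially, that $H$ being a subgroup makes $(H/N)(H/N)=H/N$ and $h^{-1}N\in H/N$ whenever $hN\in H/N$.

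First I would record non-emptiness: by Proposition \ref{2}(1) the identity $N$ lies in $\underline{\Apr}_{G/N}(H/N)$, so the set is non-empty. Next, for closure under multiplication, I would take $x_1N,x_2N\in\underline{\Apr}_{G/N}(H/N)$, so that $[x_iN]_\theta\subseteq H/N$ for $i=1,2$. Lemma \ref{1} then yields
\[
[x_1x_2N]_\theta\subseteq [x_1N]_\theta[x_2N]_\theta\subseteq (H/N)(H/N)=(HH)/N=H/N,
\]
where the final equality uses that $H$ is a subgroup. Hence $x_1x_2N\in\underline{\Apr}_{G/N}(H/N)$.

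The step I expect to require the most care is closure under inverses. Given $xN\in\underline{\Apr}_{G/N}(H/N)$, I want to show $[x^{-1}N]_\theta\subseteq H/N$. For an arbitrary $a\in G$, observe that
\[
(ax^{-1}a^{-1})N=((axa^{-1})N)^{-1}.
\]
Since $(axa^{-1})N\in[xN]_\theta\subseteq H/N$, we can write $(axa^{-1})N=hN$ for some $h\in H$; then $(ax^{-1}a^{-1})N=h^{-1}N$, and because $H$ is a subgroup we have $h^{-1}\in H$, so $(ax^{-1}a^{-1})N\in H/N$. Varying $a$ gives $[x^{-1}N]_\theta\subseteq H/N$, hence $x^{-1}N\in\underline{\Apr}_{G/N}(H/N)$.

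Finally, for normality, I would take $xN\in\underline{\Apr}_{G/N}(H/N)$ and an arbitrary $gN\in G/N$, and show $(gN)(xN)(gN)^{-1}=(gxg^{-1})N$ lies in $\underline{\Apr}_{G/N}(H/N)$. This is essentially automatic from the definition of $\theta$: as $a$ runs over $G$, so does $ag$, whence
\[
[(gxg^{-1})N]_\theta=\{(a(gxg^{-1})a^{-1})N:a\in G\}=\{((ag)x(ag)^{-1})N:a\in G\}=[xN]_\theta\subseteq H/N.
\]
Combining the four steps shows that $\underline{\Apr}_{G/N}(H/N)$ is a normal subgroup of $G/N$, so $\Apr_{G/N}(H/N)$ is a lower rough normal subgroup, completing the proof.
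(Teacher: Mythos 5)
Your proposal is correct and follows essentially the same route as the paper's own proof: non-emptiness via Proposition \ref{2}(1), closure under products via Lemma \ref{1} together with $(H/N)(H/N)=H/N$, inverses via the identity $(ax^{-1}a^{-1})N=((axa^{-1})N)^{-1}$, and normality from $[(gxg^{-1})N]_\theta=[xN]_\theta$. Your explicit re-indexing $a\mapsto ag$ in the normality step is a small clarification of a point the paper simply asserts, but the argument is otherwise identical.
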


\begin{proof}
By Proposition \ref{2}$(1)$ $\underline{\Apr}_{G/N}(H/N)\neq \varnothing$. Since $H/N$ is a subgroup so $[N]_{{\theta}}=\{N\}\subseteq H/N$ so $N\in \underline{\Apr}_{G/N}(H/N).$ It shows that identity element exists in $N\in \underline{\Apr}_{G/N}(H/N).$

Now let $x_iN\in \underline{\Apr}_{G/N}(H/N)$ for all $i=1,2$ then by definition we have $[x_iN]_{{\theta}}\subseteq H/N$ for all $i=1,2$. By Lemma \ref{1} it follows that
\[
[x_1x_2N]_{{\theta}}\subseteq [x_1N]_{{\theta}}[x_2N]_{{\theta}}\subseteq H/N.
\]
Recall that $H/N$ is a subgroup. So we have $(x_1N)(x_2N)=x_1x_2N\in \underline{\Apr}_{G/N}(H/N).$ This proves that closure law holds in $\underline{\Apr}_{G/N}(H/N).$

Now let $(g_1x_1^{-1}g_1^{-1})N\in [x_1^{-1}N]_{{\theta}}$ be an arbitrary element where $g_1\in G$. Since $N$ is normal so we have
\[
(g_1x_1^{-1}g_1^{-1})N=((g_1x_1g_1^{-1})N)^{-1}.
\]
Since $H/N$ is a subgroup and $(g_1x_1g_1^{-1})N\in H/N$ so it follows that $(g_1x_1^{-1}g_1^{-1})N\in H/N$. This proves that $[x_1^{-1}N]_{{\theta}}\subseteq H/N$. Then $x_1^{-1}N\in \underline{\Apr}_{G/N}(H/N)$. Moreover associative law holds in $G/N$ so it is also hold in $\underline{\Apr}_{G/N}(H/N)$. Hence $\underline{\Apr}_{G/N}(H/N)$ is a subgroup of $G/N.$

To prove normality let $gN\in G/N$ and $xN\in \underline{\Apr}_{G/N}(H/N)$ be any elements. Then it follows that
\[
gxg^{-1}N\in [gxg^{-1}N]_{{\theta}}=[xN]_{{\theta}}\subseteq H/N.
\]
Since $N$ is normal so by definition it implies that $(gN)(xN)(g^{-1}N)=gxg^{-1}N\in \underline{\Apr}_{G/N}(H/N)$. Consequently, this proves the normality of $\underline{\Apr}_{G/N}(H/N)$. Hence $\Apr_{G/N}(H/N)$ is a lower rough normal sub-group of $G/N$.
\end{proof}

The next example shows that $\Apr_{G/N}(H/N)$ is not an upper rough sub-group of $G/N$ even $H$ is a subgroup of $G$. Moreover it is also shown that the lower approximation of a subgroup $H/N$ is a proper normal subgroup of $H/N$.

\begin{example}\label{22}
Let $G=S_4$, $N=\{I,(12)(34),(13)(24),(14)(23)\}$ and $H=N\cup \{(12),(34),(1324),(4231)\}$. Then $N$ is a normal subgroup and $H$ is a subgroup of $G$ such that $N\subseteq H$. Moreover $H$ is not normal in $G$. Since $|H/N|=2$ and $|G/N|=6$ so by Example \ref{13} it follows that
\[
G/N=\{N,(1234)N,(1324)N,(1243)N,(123)N,(132)N\},
\]
\[
H/N=\{N,(1324)N\},
\]
\[
[(1234)N]_{{\theta}}=\{(1234)N,(1324)N,(1243)N\} \text{ and } [(123)N]_{{\theta}}=\{(132)N,(123)N\}
\]
Then we conclude that $\overline{\Apr}_{G/N}(H/N)=\{N,(1234)N,(1324)N,(1243)N\}$. It implies that $|\overline{\Apr}_{G/N}(H/N)|=4$ which does not divide $6$. So $\overline{\Apr}_{G/N}(H/N)$ is not a subgroup of $G/N.$ Therefore $\Apr_{G/N}(H/N)$ is not an upper rough sub-group of $G/N$. Moreover $\underline{\Apr}_{G/N}(H/N)=\{N\}$ is a proper normal subgroup of $H/N.$
\end{example}

\begin{remark}
If $H$ is a subgroup of $G$ contains a normal subgroup $N$. By Example \ref{22} it follows that $\overline{\Apr}_{G/N}(H/N)$ does not satisfy the closure law.
\end{remark}

The next Corollary shows that the intersection and product of the lower approximations are also normal subgroups.
\begin{corollary}\label{14}
Let $N$ and $M$ be two normal subgroups of $G$. Let $H_i$ be a subgroup of $G$ such that $N\subseteq H_i$ for all $i=1,2$. Suppose that $M\subseteq H_1$ then each of the following is a normal subgroup of $G/N$:
\begin{itemize}
\item [(1)] $\underline{\Apr}_{G/N}(H_1/N)\underline{\Apr}_{G/N}(H_2/N).$
\item [(2)] $\underline{\Apr}_{G/N}(H_1/N)\cap \underline{\Apr}_{G/N}(H_2/N).$
\item [(3)] $\underline{\Apr}_{G/N}((H_1\cap H_2)/N).$
\item [(4)] $\underline{\Apr}_{G/NM}(H_1/NM)$.
\item [(5)] $\underline{\Apr}_{G/N\cap M}(H_1/N\cap M)$.
\end{itemize}
\end{corollary}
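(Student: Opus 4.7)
The plan is to deduce all five claims from Proposition \ref{4}, which gives the normality of $\underline{\Apr}_{G/N}(H/N)$ whenever $H$ is a subgroup of $G$ containing the normal subgroup $N$. So the task reduces to checking, in each case, that the hypotheses of Proposition \ref{4} are satisfied and then invoking standard facts about normal subgroups (products of normal subgroups are normal, intersections of normal subgroups are normal).

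For parts (1), (2), (3) I would first note that Proposition \ref{4} applies to both $H_1$ and $H_2$, so each of $\underline{\Apr}_{G/N}(H_1/N)$ and $\underline{\Apr}_{G/N}(H_2/N)$ is already a normal subgroup of $G/N$. Then (1) follows because the product of two normal subgroups of any group is again a normal subgroup; (2) follows because the intersection of two normal subgroups is a normal subgroup. For (3), I would observe that $H_1 \cap H_2$ is a subgroup of $G$ (intersection of subgroups) and contains $N$ (since $N \subseteq H_i$ for both $i$), so Proposition \ref{4} applies directly to $H_1 \cap H_2$.

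For (4) the key point is to verify that $NM$ plays the role of the ambient normal subgroup: since $N$ and $M$ are both normal in $G$, the product $NM$ is a normal subgroup of $G$; moreover, $N \subseteq H_1$ and $M \subseteq H_1$ together with the fact that $H_1$ is a subgroup force $NM \subseteq H_1$. Thus the hypotheses of Proposition \ref{4} hold with $(N, H) = (NM, H_1)$, and the conclusion follows. For (5), $N \cap M$ is normal in $G$ as the intersection of two normal subgroups, and $N \cap M \subseteq N \subseteq H_1$, so again Proposition \ref{4} applies with $(N, H) = (N \cap M, H_1)$.

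The only mild subtleties are ensuring that the replacement ambient group in (4) and (5) is genuinely a normal subgroup contained in $H_1$; there is no real obstacle since everything reduces to the two bookkeeping observations that $N, M \subseteq H_1$ implies $NM \subseteq H_1$ (using closure of the subgroup $H_1$) and that $N \cap M \subseteq N \subseteq H_1$. Once these containments are recorded, each of (1)--(5) is an immediate appeal to Proposition \ref{4} combined with an elementary fact about normal subgroups.
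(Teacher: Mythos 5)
Your proposal is correct and follows essentially the same route as the paper: each part is reduced to Proposition \ref{4} together with the standard facts that products and intersections of normal subgroups are normal, and that $NM$ and $N\cap M$ are normal subgroups of $G$ contained in $H_1$. You are in fact slightly more careful than the paper in justifying the containment $NM\subseteq H_1$, which the paper merely asserts.
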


\begin{proof}
Note that $\underline{\Apr}_{G/N}(H_i/N)$ is a normal subgroup of $G/N$  for all $i=1,2$ (see Proposition \ref{4}). Then it follows that
\[
\underline{\Apr}_{G/N}(H_1/N)\cap \underline{\Apr}_{G/N}(H_2/N) \text{ and } \underline{\Apr}_{G/N}(H_1/N)\underline{\Apr}_{G/N}(H_2/N)
\]
both are normal subgroups of $G/N$. Also note that $H_1\cap H_2$ is a subgroup of $G$ containing $N$. Again Proposition \ref{4} implies that $\underline{\Apr}_{G/N}((H_1\cap H_2)/N)$ is a normal subgroup of $G/N$.

Furthermore note that $NM$ and $N\cap M$ both are normal subgroups of $G$ contained in $H_1$. So it follows that $\underline{\Apr}_{G/NM}(H_1/NM)$
and $\underline{\Apr}_{G/N\cap M}(H_1/N\cap M)$ are normal subgroups of $G/N$.
\end{proof}

\section{Homomorphism between lower approximations}
Let $N$ and $M$ be any two normal subgroups of $G$. Throughout this section we will denote $\theta_1$ and $\theta$ by the conjugacy relations over $G/M$ and $G/N$ respectively. Let $H$ be any subset of $G$ containing $N$ and $M$. Here we will relate the lower and upper approximations of $H/N$ and $H/M$. Moreover we are able to develop some homomorphisms between the lower approximations of $H/N$ and $H/M$.

\begin{theorem}\label{19}
Suppose that $N\subseteq M$ are normal subgroups of $G$ and $H\subseteq G$ such that $M\subseteq H$. Let $x\in G$ be any fixed element. If $xN\in \underline{\Apr}_{G/N}(H/N)$ (resp. $xN\in \overline{\Apr}_{G/N}(H/N)$) then
\[
xM\in \underline{\Apr}_{G/M}(H/M) \text{ (resp. } xM\in \overline{\Apr}_{G/M}(H/M)).
\]
\end{theorem}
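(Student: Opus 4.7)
The plan is to exploit the inclusion $N \subseteq M$, which says that every coset of $N$ is contained in a coset of $M$, so any equality of cosets modulo $N$ automatically gives the corresponding equality modulo $M$. Concretely, if $aN = bN$ then $ab^{-1} \in N \subseteq M$, and hence $aM = bM$. I will apply this observation after writing out the definitions of $\underline{\Apr}$ and $\overline{\Apr}$, noting that conjugation is compatible with the passage from $G/N$ to $G/M$, so $(axa^{-1})N \mapsto (axa^{-1})M$ sends $[xN]_\theta$ into $[xM]_{\theta_1}$.

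For the lower approximation, I would start with an arbitrary element of $[xM]_{\theta_1}$, say $(axa^{-1})M$ for some $a\in G$, and aim to produce an element $h\in H$ with $(axa^{-1})M = hM$. The hypothesis $xN\in\underline{\Apr}_{G/N}(H/N)$ gives $(axa^{-1})N\in[xN]_\theta\subseteq H/N$, so $(axa^{-1})N=hN$ for some $h\in H$. Then $(axa^{-1})h^{-1}\in N\subseteq M$, which gives $(axa^{-1})M=hM\in H/M$. Since the conjugate was arbitrary, $[xM]_{\theta_1}\subseteq H/M$, i.e., $xM\in\underline{\Apr}_{G/M}(H/M)$.

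For the upper approximation, the assumption $xN\in\overline{\Apr}_{G/N}(H/N)$ yields an element $yN\in[xN]_\theta\cap H/N$, so there exist $a\in G$ and $h\in H$ with $yN=(axa^{-1})N=hN$. Using $N\subseteq M$ exactly as above, $(axa^{-1})h^{-1}\in M$, so $(axa^{-1})M=hM$. This single coset lies both in $[xM]_{\theta_1}$ (by construction, as a conjugate of $xM$) and in $H/M$ (since $h\in H$), proving $[xM]_{\theta_1}\cap H/M\neq\varnothing$ and hence $xM\in\overline{\Apr}_{G/M}(H/M)$.

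The argument is really a direct verification, so I do not expect a genuine obstacle. The only point that deserves careful notation is keeping the two equivalence relations $\theta$ and $\theta_1$ straight and making clear that the quotient map $G/N\twoheadrightarrow G/M$ is conjugation-equivariant; once this is written explicitly, both inclusions fall out from the single fact that $N\subseteq M$.
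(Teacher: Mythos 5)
Your proposal is correct and follows essentially the same route as the paper's own proof: reduce both statements to the single observation that $aN=bN$ forces $aM=bM$ when $N\subseteq M$, applied to a conjugate $(axa^{-1})N=hN$ with $h\in H$. The only cosmetic difference is that in the upper-approximation case the paper writes the witness as $xN=gyg^{-1}N$ while you write $yN=(axa^{-1})N$, which is the same thing by symmetry of conjugacy.
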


\begin{proof}
First of all let $N\subseteq M$ and $xN \in \underline{\Apr}_{G/N}(H/N)$. Let $gxg^{-1}M\in [xM]_{{\theta}_1}$ be an arbitrary element with $g\in G$. Since $gxg^{-1}N\in [xN]_{{\theta}}\subseteq H/N$. So there exists $h\in H$ such that $gxg^{-1}N=hN$. It implies that
\[
gxg^{-1}h^{-1}\in N\subseteq M.
\]
Since $M$ is a subgroup so we have $gxg^{-1}M= hM$. Then $gxg^{-1}M\in H/M$. But $gxg^{-1}M\in [xM]_{{\theta}_1}$ was arbitrary so $[xM]_{{\theta}_1}\subseteq H/M$. By definition of the lower approximation of $H/M$ we have $xM\in \underline{\Apr}_{G/M}(H/M)$.

Now let $xN\in \overline{\Apr}_{G/N}(H/N)$ then $[xN]_{{\theta}}\cap H/N\neq \varnothing.$ There exist $g,y \in G$ with $yN\in H/N$ and $xN=gyg^{-1}N$. Then $yN=hN$ where $h\in H$. It implies that $yh^{-1}\in N\subseteq M$ and $x(gyg^{-1})^{-1}\in N\subseteq M.$ Then
\[
yM=hM \text{ and } xM=gyg^{-1}M.
\]
So we have $yM\in H/M$ such that $xM\theta_1 yM$. By definition of the upper approximation we get that $[xM]_{{\theta}_1}\cap H/M\neq \varnothing$ and $xM\in \overline{\Apr}_{G/M}(H/M)$. This completes the proof of the Theorem.
\end{proof}

The next Proposition shows that the converse of Theorem \ref{19} also holds under the additional assumption of $H$ is a subgroup of $G$.
\begin{proposition}\label{20}
Let $N$ and $M$ be two normal subgroups of $G$  and $H$ a subgroup of $G$ containing $N$ and $M$. Then for any $x\in G$ the following conditions are equivalent:
\begin{itemize}
\item [(1)] $xN\in \underline{\Apr}_{G/N}(H/N)$.
\item [(2)] $xM\in \underline{\Apr}_{G/M}(H/M).$
\item [(3)] $xNM\in \underline{\Apr}_{G/NM}(H/NM).$
\item [(4)] $x(N\cap M)\in \overline{\Apr}_{G/N\cap M}(H/N\cap M).$
\end{itemize}
\end{proposition}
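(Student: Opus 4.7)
The plan is to collapse all four quotient-level conditions to a single combinatorial statement about the $G$-conjugates of $x$, then derive the equivalences from that common reformulation.

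The first observation, which is the technical engine of the argument, is that whenever $K$ is a normal subgroup of $G$ contained in the subgroup $H$, a coset $yK$ lies in $H/K$ if and only if $y \in H$, since $K \subseteq H$ and $H$ is closed under products. Unwinding the definitions of lower and upper approximation therefore yields the uniform reformulation: $xK \in \underline{\Apr}_{G/K}(H/K)$ is equivalent to $gxg^{-1} \in H$ for every $g \in G$, while $xK \in \overline{\Apr}_{G/K}(H/K)$ is equivalent to $gxg^{-1} \in H$ for some $g \in G$. Because $H$ is a subgroup of $G$ containing both $N$ and $M$, each of $N$, $M$, $NM$, and $N \cap M$ is normal in $G$ and contained in $H$, so this reformulation applies to all four quotients simultaneously.

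From the reformulation, (1), (2), and (3) are literally the same statement, namely ``every $G$-conjugate of $x$ lies in $H$,'' so the three-way equivalence (1) $\Leftrightarrow$ (2) $\Leftrightarrow$ (3) follows at once. The implication (1) $\Rightarrow$ (4) is also free: if every conjugate of $x$ lies in $H$ then in particular $x \in H$, hence $x(N \cap M) \in H/(N \cap M) \subseteq \overline{\Apr}_{G/(N \cap M)}(H/(N \cap M))$ by Proposition \ref{2}(1).

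The main obstacle is the implication (4) $\Rightarrow$ (1). Under my reformulation, (4) only produces a single $g \in G$ with $gxg^{-1} \in H$, whereas (1) demands this for every $g \in G$. My strategy is to attempt to close the gap using Theorem \ref{19} along the chains $N \cap M \subseteq N$ and $N \cap M \subseteq M$, combined with the normality of $N \cap M$ and the subgroup structure of $H$, in order to promote the single-conjugate datum to a statement about the entire conjugacy class of $x$. I expect this step to be delicate: as Example \ref{22} already shows, the upper approximation of a non-normal subgroup can strictly exceed that subgroup, so any bridge from ``some conjugate in $H$'' to ``all conjugates in $H$'' must make essential use of how $N \cap M$ interacts with $H$ and with the normal subgroups $N$ and $M$. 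If no such bridge can be built, the natural reading of (4) is with $\underline{\Apr}_{G/(N \cap M)}$ in place of $\overline{\Apr}_{G/(N \cap M)}$, in which case Theorem \ref{19} applied to the inclusion $N \cap M \subseteq N$ closes the cycle immediately.
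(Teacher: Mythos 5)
Your uniform reformulation --- for any normal $K\subseteq H$ with $H$ a subgroup, $xK\in \underline{\Apr}_{G/K}(H/K)$ iff $gxg^{-1}\in H$ for \emph{every} $g\in G$, and $xK\in \overline{\Apr}_{G/K}(H/K)$ iff $gxg^{-1}\in H$ for \emph{some} $g\in G$ --- is exactly the engine of the paper's own proof of $(1)\Leftrightarrow(2)$ (there written as $gxg^{-1}h^{-1}\in N\subseteq H$ plus closure of $H$), and it settles $(1)\Leftrightarrow(2)\Leftrightarrow(3)$ correctly and more transparently than the paper, which writes out only $(1)\Rightarrow(2)$ and dismisses the rest with ``the same arguments.'' Where you hedge, you should commit: no bridge from $(4)$ to $(1)$ can be built, because with $\overline{\Apr}$ in item $(4)$ the proposition is \emph{false as stated}. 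Take $M=N$, so $N\cap M=N$, and use the paper's own Example \ref{22}: there $\overline{\Apr}_{G/N}(H/N)=\{N,(1234)N,(1324)N,(1243)N\}$ while $\underline{\Apr}_{G/N}(H/N)=\{N\}$, so $x=(1234)$ satisfies $(4)$ but not $(1)$. Even more simply, with $N=M=\{e\}$, $G=S_3$, $H=\{e,(12)\}$, $x=(13)$, condition $(4)$ says some conjugate of $(13)$ lies in $H$ (true, via $(12)$) while $(1)$ says all conjugates do (false). The overline in $(4)$ is therefore a typo for the lower approximation, a reading corroborated both by Corollary \ref{17}(3) and by the paper's proof itself, whose claim that $(4)$ follows ``by the above same arguments'' is only sensible for $\underline{\Apr}$.

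With that emendation your argument is complete: all four conditions become literally the statement ``every $G$-conjugate of $x$ lies in $H$,'' and the cycle closes at once; your fallback route via Theorem \ref{19} applied to $N\cap M\subseteq N$ also works for $(4)\Rightarrow(1)$, with the reformulation (or Theorem \ref{19} in the other chains) giving the converse. Two small points of bookkeeping: your $(1)\Rightarrow(4)$ via Proposition \ref{2}(1) is fine but proves only the weaker, true half of the stated $(4)$; and a referee would want the counterexample above stated outright rather than left as ``if no such bridge can be built'' --- the delicacy you anticipated is not delicacy but impossibility, as your own citation of Example \ref{22} already suggested.
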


\begin{proof}
$(1)\Rightarrow (2)$. Suppose that $xN\in \underline{\Apr}_{G/N}(H/N)$ then it follows that $[xN]_{\theta}\subseteq H/N$. Let $gxg^{-1}M$ by any element of $ [xM]_{\theta_1}$ where $g\in G$. Note that there exists $h\in H$ such that $gxg^{-1}N=hN$. It implies that
\[
gxg^{-1}h^{-1}\in N\subseteq H.
\]
Since $H$ is a subgroup of $G$ and $h\in H$ it implies that $gxg^{-1}\in H$. So we have $gxg^{-1}M\in H/M$. This proves that $[xM]_{\theta_1}$ is a subset of $ H/M$. Therefore $xM\in \underline{\Apr}_{G/M}(H/M).$ By interchanging the role of $N$ and $M$ we can prove that $(2)$ implies $(1)$.

Note that by the above same arguments we can prove that $(1)$ is also equivalent to $(3)$ and $(4)$. Hence this finishes the proof of the Proposition.
\end{proof}

\begin{corollary}\label{17}
Let $N$ and $M$ be two normal subgroups and $H$ any subset of $G$ containing both $N$ and $M$. Let $x\in G$ be a fixed element. Then the following conditions hold:
\begin{itemize}
\item [(1)] Suppose that $NM\subseteq H$. If $xN\in \underline{\Apr}_{G/N}(H/N)$ or $xM\in \underline{\Apr}_{G/M}(H/M)$ then
\[
xNM\in \underline{\Apr}_{G/NM}(H/NM).
\]
\item [(2)] Suppose that $NM\subseteq H$. If $xN\in \overline{\Apr}_{G/N}(H/N)$ or $xM\in \overline{\Apr}_{G/M}(H/M)$ then
\[
xNM\in \overline{\Apr}_{G/NM}(H/NM).
\]
\item [(3)] If $x(N\cap M)\in \underline{\Apr}_{G/N\cap M}(H/N\cap M)$ then
\[
xN\in \underline{\Apr}_{G/N}(H/N) \text{ and } xM\in \underline{\Apr}_{G/M}(H/M).
\]
\item [(4)] If $x(N\cap M)\in \overline{\Apr}_{G/N\cap M}(H/N\cap M)$ then
\[
xN\in \overline{\Apr}_{G/N}(H/N)\text{ and } xM\in \overline{\Apr}_{G/M}(H/M).
\]
\end{itemize}
\end{corollary}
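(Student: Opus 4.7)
\textbf{Proof proposal for Corollary \ref{17}.} The strategy is to derive all four parts as direct applications of Theorem \ref{19}, each with a suitable pair of nested normal subgroups. Recall that Theorem \ref{19} says: whenever $N' \subseteq M'$ are normal subgroups of $G$ with $M' \subseteq H$, we have $xN' \in \underline{\Apr}_{G/N'}(H/N') \Rightarrow xM' \in \underline{\Apr}_{G/M'}(H/M')$, and the analogous implication for upper approximations.

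For parts (1) and (2), note that $NM$ is a normal subgroup of $G$ (the product of two normal subgroups is normal), and that $N \subseteq NM$ and $M \subseteq NM$. The hypothesis $NM \subseteq H$ supplies the containment required by Theorem \ref{19}. Applying the theorem to the nested pair $N \subseteq NM$ yields $xN \in \underline{\Apr}_{G/N}(H/N) \Rightarrow xNM \in \underline{\Apr}_{G/NM}(H/NM)$, and applying it to $M \subseteq NM$ yields $xM \in \underline{\Apr}_{G/M}(H/M) \Rightarrow xNM \in \underline{\Apr}_{G/NM}(H/NM)$. This proves (1). The upper-approximation half of Theorem \ref{19} gives (2) by identical reasoning, with the same pairs of subgroups.

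For parts (3) and (4), $N \cap M$ is a normal subgroup of $G$ contained in both $N$ and $M$, and the standing hypothesis that $H$ contains both $N$ and $M$ supplies the required containment in $H$. Thus Theorem \ref{19} applied to the nested pair $N \cap M \subseteq N$ gives $x(N \cap M) \in \underline{\Apr}_{G/N \cap M}(H/N \cap M) \Rightarrow xN \in \underline{\Apr}_{G/N}(H/N)$, and applied to $N \cap M \subseteq M$ gives the analogous implication for $xM$; together these establish (3). The upper-approximation version of Theorem \ref{19}, used in the same way on the same two pairs, proves (4).

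The verification presents no real obstacle: one only needs to note that $NM$ and $N \cap M$ are normal subgroups of $G$ and that the appropriate containments in $H$ hold under each set of hypotheses. After this bookkeeping, each of the four items reduces to a one-line invocation of Theorem \ref{19}, so the main work is simply selecting the correct pair of subgroups for the input.
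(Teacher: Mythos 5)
Your proposal is correct and follows exactly the paper's route: the paper also disposes of all four parts by invoking Theorem \ref{19} after observing that $NM$ and $N\cap M$ are normal subgroups of $G$. You simply spell out the choice of nested pairs ($N\subseteq NM$, $M\subseteq NM$, $N\cap M\subseteq N$, $N\cap M\subseteq M$) that the paper leaves implicit.
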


\begin{proof}
It is straightforward in view of the Theorem \ref{19}. Note that $NM$ and $N\cap M$ both are normal subgroups of $G$.
\end{proof}

Before proving the next result we need some preparation. Let $N$ and $ M$ be two normal subgroups of $G$ and $H$ a subgroup of $G$ containing $N$ and $M$. Since $\underline{\Apr}_{G/N}(H/N)$ and $\underline{\Apr}_{G/M}(H/M)$ are normal subgroups of $G/N$ and $G/M$ respectively (see Proposition \ref{4}). So by one-one correspondence Theorem $\underline{\Apr}_{G/N}(H/N)$ and $\underline{\Apr}_{G/M}(H/M)$ are of the form $K/N$ and $T/M$ respectively. Here $K$ and $T$ are normal subgroups of $G$ such that $N\subseteq K\subseteq H$ and $M\subseteq T\subseteq H$.

\begin{theorem}\label{8}
With the above notation suppose that $N\subseteq M$. Then the following results are true:
\begin{itemize}
\item [(1)] There is a group isomorphism
\[
\frac{G/N}{K/N}\to \frac{{G/M}}{{T/M}}, xN(K/N)\mapsto xM(T/M).
\]
In particular $G/K$ is isomorphic to $G/T$.
\item [(2)] There is a group isomorphism
\[
\frac{H/N}{K/N}\to \frac{{H/M}}{{T/M}}, xN(K/N)\mapsto xM(T/M).
\]
In particular $H/K$ is isomorphic to $H/T$.
\item [(3)] There is an onto group homomorphism
\[
K/N\to T/M, xN\mapsto xM.
\]
with kernel is equal to $M/N.$ In particular we have the following inclusions:
\[
M/N\subseteq K/N\subseteq H/N\text{ and } M\subseteq K\subseteq H.
\]
\item [(4)] $K/M$ is isomorphic to $T/M$.
\end{itemize}
\end{theorem}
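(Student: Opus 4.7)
My plan is to first show that $K = T$ as subgroups of $G$, which collapses all four parts of the theorem to routine applications of the standard isomorphism theorems. The identification $K = T$ will come directly from Proposition \ref{20}: for every $x \in G$ we have $xN \in \underline{\Apr}_{G/N}(H/N)$ if and only if $xM \in \underline{\Apr}_{G/M}(H/M)$, which translates through the correspondence theorem to $x \in K \iff x \in T$. I expect this single observation to be the main content of the proof; everything afterwards should be essentially bookkeeping.

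With $K = T$ in hand, for part (1) I would invoke the third isomorphism theorem twice to identify both $(G/N)/(K/N)$ and $(G/M)/(T/M)$ with $G/K = G/T$, and then verify that the explicit map $xN(K/N) \mapsto xM(T/M)$ realizes this common isomorphism. Well-definedness uses $N \subseteq M$, while bijectivity uses $K = T$. Part (2) will be the same argument carried out inside $H$, using that $K/N \subseteq H/N$ and $T/M \subseteq H/M$ by the definition of the lower approximations, so $(H/N)/(K/N) \cong H/K = H/T \cong (H/M)/(T/M)$. Part (4) is then immediate, since $K = T$ forces $K/M = T/M$, once we know $M \subseteq K$.

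For part (3), the plan is to define $\varphi : K/N \to T/M$ by $xN \mapsto xM$; well-definedness follows from $N \subseteq M$ and $\varphi$ is visibly a homomorphism. Surjectivity is immediate from $K = T$, and the kernel is $\{xN : x \in M\} = M/N$. The ``in particular'' inclusions fall out as follows: $K \subseteq H$ holds because $K/N = \underline{\Apr}_{G/N}(H/N) \subseteq H/N$ by Proposition \ref{2}$(1)$, while $M \subseteq K$ is verified directly by observing that for any $m \in M$, normality of $M$ gives $[mN]_{\theta} = \{gmg^{-1}N : g \in G\} \subseteq M/N \subseteq H/N$, so $mN \in \underline{\Apr}_{G/N}(H/N) = K/N$. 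This same inclusion $M \subseteq K$ justifies the kernel computation (and the quotient $K/M$ in part (4)) in the first place.
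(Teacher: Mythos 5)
Your proposal is correct, and it rests on the same key input as the paper's proof, namely Proposition \ref{20}; but you package that input differently, and the difference is worth noting. The paper never states the equality $K=T$: instead it constructs each map separately (the induced map $\psi$ on the quotients in (1) and (2), the map $\varphi:K/N\to T/M$ in (3)) and invokes Proposition \ref{20} each time to check that the map lands in the right place, is injective, or is onto, finally obtaining (4) from (3) via the isomorphism theorems applied to $\varphi$. You instead extract from Proposition \ref{20} the single structural fact $x\in K\Leftrightarrow x\in T$, i.e. $K=T$ (both are in fact the normal core $\bigcap_{g}g^{-1}Hg$ of $H$ in $G$, independent of which normal subgroup one quotients by), after which (1) and (2) are two applications of the third isomorphism theorem, (4) becomes the literal identity $K/M=T/M$, and only (3) still needs the hypothesis $N\subseteq M$. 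This is a genuinely cleaner decomposition that also reveals that parts (1), (2) and (4) hold without assuming $N\subseteq M$. Your supporting verifications ($M\subseteq K$ via normality of $M$, $K\subseteq H$ via Proposition \ref{2}(1), the kernel computation) are all sound; the only quibble is that well-definedness of the map in (1) really rests on $K\subseteq T$ together with $M\subseteq T$ rather than on $N\subseteq M$ as you state, but this is harmless since you have already established $K=T$ at that point.
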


\begin{proof}
$(1)$ Since $N\subseteq M$ so it induces an onto group homomorphism $f: G/N\to G/M,xN\mapsto xM$. We claim that $f(K/N)\subseteq T/M$. If $xM\in f(K/N)$ then there exists $yN\in K/N$ such that $yM=f(yN)=xM$. Since $H$ is a subgroup of $G$ and $yN\in K/N$ so it follows that $yM\in T/M$ (see Proposition \ref{20}). This proves that $xM\in T/M$ and $f(K/N)\subseteq T/M$. So we have proved the claim. Then it is well known that $f$ induces the following group homomorphism
\[
\psi: \frac{G/N}{K/N}\to \frac{{G/M}}{{T/M}},
\]
with $\psi(xN(K/N))= f(xN)(T/M)=xM(T/M)$. Since $f$ is onto it follows that $\psi$ is onto. We only need to prove that $\psi$ is injective. Let $xN\in \ker(\psi)$ then
\[
\psi(xN(K/N))=xM(T/M)=T/M.
\]
It implies that $xM\in T/M$. By Proposition \ref{20} we get that $xN\in K/N$. Recall that $H$ is a subgroup of $G$. This proves that $\psi$ is an isomorphism. Moreover by second isomorphism Theorem we get that $G/K$ is isomorphic to $G/T$. By following the same steps we can prove the isomorphisms in $(2)$.

$(3)$ Let $\varphi: K/N\to T/M$ be defined as $\varphi(xN)= xM$ for all $xN\in K/N$. Firstly we will show that $\varphi$ is well-defined. For this let $x_1N=x_2N$ with $x_iN\in K/N$ for all $i=1,2.$ By Theorem \ref{19} it implies that $x_1x_2^{-1}\in N\subseteq M$ such that $x_iM\in T/M$ for all $i=1,2.$ So we conclude that
\[
\varphi(x_1N)= x_1M=x_2M=\varphi(x_2N).
\]
This proves that $\varphi$ is well-defined. Moreover
\[
\varphi((x_1N)(x_2N))=\varphi(x_1x_2N)= x_1x_2M=(x_1M)(x_2M)=\varphi(x_1N)\varphi(x_2N)
\]
since $N$ and $M$ are normal. So $\varphi$ is a group homomorphism. By Proposition \ref{20} it is easy to see that $\varphi$ is onto. Now we calculate the $\ker(\varphi)$ as follows:
\[
\ker(\varphi)=\{xN:\varphi(xN)=M\}=\{xN:xM=M\}=\{xN:x\in M\}=M/N.
\]
This proves the claim in $(2)$ in view of one-one correspondence Theorem. Moreover $(4)$ follows from $(3)$ in view of the second isomorphism Theorem.
\end{proof}

\begin{corollary}\label{}
Let $N$ and $M$ be two normal subgroups of $G$ contained in a subgroup $H$. Then we have:
\begin{itemize}
\item [(1)] There is an onto group homomorphism
\[
\underline{\Apr}_{G/(N\cap M)}(H/(N\cap M))\to \underline{\Apr}_{G/N}(H/N), x(N\cap M)\mapsto xN.
\]
with kernel is equal to $N/(N\cap M).$
\item [(2)] There is an onto group homomorphism
\[
\underline{\Apr}_{G/N}(H/N)\to \underline{\Apr}_{G/NM}(H/NM), xN\mapsto x(NM).
\]
with kernel is equal to $NM/N.$
\item [(3)] There is an onto group homomorphism
\[
\underline{\Apr}_{G/(N\cap M)}(H/(N\cap M))\to \underline{\Apr}_{G/NM}(H/NM), x(N\cap M)\mapsto xNM.
\]
with kernel is equal to $NM/(N\cap M).$
\end{itemize}
\end{corollary}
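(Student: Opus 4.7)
The plan is to reduce all three parts to direct applications of Theorem \ref{8}(3) by choosing the right pair of nested normal subgroups each time. Recall that Theorem \ref{8}(3) takes as input a pair $N \subseteq M$ of normal subgroups of $G$ together with a subgroup $H$ containing $M$, and produces an onto homomorphism $\underline{\Apr}_{G/N}(H/N) \to \underline{\Apr}_{G/M}(H/M)$ given by $xN \mapsto xM$, with kernel $M/N$. So each of (1), (2), (3) is obtained by choosing appropriate substitutions for the symbols ``$N$'' and ``$M$'' appearing in that theorem.

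For part (1), I would apply Theorem \ref{8}(3) with $N\cap M$ playing the role of ``$N$'' and with our $N$ playing the role of ``$M$''. Since $N$ and $M$ are normal in $G$, their intersection $N\cap M$ is normal as well, and clearly $N\cap M \subseteq N \subseteq H$, so all hypotheses are satisfied and the desired onto homomorphism with kernel $N/(N\cap M)$ drops out. For part (2), I would instead take our $N$ in the role of ``$N$'' and $NM$ in the role of ``$M$''. The product $NM$ is normal because $N$ and $M$ are, and $N \subseteq NM \subseteq H$ because $H$ is a subgroup containing both $N$ and $M$; Theorem \ref{8}(3) then delivers the map and the kernel $NM/N$. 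For part (3), I would take $N\cap M$ in the role of ``$N$'' and $NM$ in the role of ``$M$'', noting that $N\cap M \subseteq NM \subseteq H$ and both are normal; Theorem \ref{8}(3) then gives the onto homomorphism with kernel $NM/(N\cap M)$.

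There is no genuine difficulty here: all the content is already packaged inside Theorem \ref{8}(3), and the proposal is essentially bookkeeping. The only points worth being explicit about are the standard facts that $N\cap M$ and $NM$ are normal subgroups of $G$ when $N$ and $M$ are, and that $H$ being a subgroup containing $N$ and $M$ forces $NM \subseteq H$. Once those are noted, each bullet is a one-line invocation of Theorem \ref{8}(3) with the correct identification of the two nested normal subgroups.
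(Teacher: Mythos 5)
Your proposal is correct and is essentially the same argument as the paper's: both reduce each part to the chain $(N\cap M)\subseteq N\subseteq NM$ of normal subgroups contained in $H$ and invoke the onto-homomorphism-with-kernel statement of Theorem \ref{8}. If anything, your citation is the more accurate one — the relevant item is Theorem \ref{8}(3) (the map $K/N\to T/M$ with kernel $M/N$), whereas the paper's proof points to part (2), which appears to be a slip.
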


\begin{proof}
Since $(N\cap M)$, $NM$ both are normal subgroups of $G$ contained in $H$ and $(N\cap M)\subseteq N\subseteq NM$. Then all the claims of the Corollary are obvious in view of Theorem \ref{8}$(2)$.
\end{proof}


\end{document}